\documentclass{amsart}

\RequirePackage{fix-cm}
\usepackage{graphicx}
\usepackage{amsmath, amsopn,amstext,amscd,amsfonts,amssymb}
\usepackage{dsfont}
\usepackage{comment}
\usepackage[active]{srcltx}
\usepackage{graphicx, epsfig, subfig}

\usepackage{textcomp}

\usepackage{algorithm}

\makeatletter
\def\BState{\State\hskip-\ALG@thistlm}
\makeatother

\def\downbar#1{
\setbox10=\hbox{$#1$}
            \dimen10=\ht10 \advance\dimen10 by 2.5pt
            \ifdim \dimen10<15pt 
               \advance\dimen10 by -0.5pt
               \dimen11=\dimen10
               \advance\dimen10 by 2.5pt
               \lower \dimen11
            \else \lower \ht10 \fi
            \hbox {\hskip 1.5pt \vrule height \dimen10 depth \dp10}}
\def\upbar#1{
\setbox10=\hbox{$#1$}
            \dimen10=\ht10 \advance\dimen10 by \dp10 \advance\dimen10 by 2.5pt
            \ifdim \dimen10<15pt 
                \advance\dimen10 by 2pt \fi
            \raise 2.5pt \hbox {\hskip -1.5pt \vrule height \dimen10}}

\usepackage{multicol}
\usepackage{colortbl}
\usepackage{exscale}
\usepackage{amssymb,latexsym,amsthm,amsfonts,color,fancyhdr,mathrsfs}
\usepackage{lscape}
\usepackage{rotating}
\usepackage{caption}

\newtheorem{theorem}{\bf Theorem}[section]

\newtheorem{corollary}{\bf Corollary}[section]
\newtheorem{remark}{\bf Remark}[section]


\numberwithin{equation}{section}
\bibliographystyle{elsarticle-num}

\begin{document}
\title[Classical orthogonal polynomials]{On another characterization of Askey-Wilson polynomials}

\author{D. Mbouna}
\address{D. Mbouna \\University of Almeria, Dep. Mathematics, Almeria, Spain}
\email{mbouna@ual.es}
\address{A. Suzuki \\University of Coimbra, CMUC, Dep. Mathematics, 3001-501 Coimbra, Portugal}
\author{A. Suzuki}
\email{uc46263@uc.pt}

\subjclass[2010]{42C05, 33C45}
\date{\today}
\keywords{Askey-Wilson polynomials, second order difference equation}

\begin{abstract}
In this paper we show that the only sequences of orthogonal polynomials $(P_n)_{n\geq 0}$ satisfying
\begin{align*}
\phi(x)\mathcal{D}_q P_{n}(x)=a_n\mathcal{S}_q P_{n+1}(x) +b_n\mathcal{S}_q P_n(x) +c_n\mathcal{S}_q P_{n-1}(x),
\end{align*}
($c_n\neq 0$) where $\phi$ is a well chosen polynomial of degree at most two, $\mathcal{D}_q$ is the Askey-Wilson operator and $\mathcal{S}_q$ the averaging operator, are the multiple of Askey-Wilson polynomials, or specific or limiting cases of them.
\end{abstract}
\maketitle

\section{Introduction}\label{introduction}
In 1972, Al-Salam and Chihara proved (see \cite{Al-Salam-1972}) that $(P_n)_{n\geq 0}$ is a $\mathrm{D}$-classical orthogonal polynomial sequence (OPS), namely Hermite, Laguerre, Bessel or Jacobi families, if and only if 
\begin{align}
(az^2+bz+c)\mathrm{D}P_n(z)=(a_nz+b_n)P_n(z)+c_nP_{n-1}(z)\quad (c_n\neq 0)\;, \label{very-classical}
\end{align}
where $\mathrm{D}=d/dz$. We can replace $\mathrm{D}$ in \eqref{very-classical} by the following Askey-Wilson operator
\begin{align*}
\mathcal{D}_q\, p(x(s))= \frac{p(x(s+1/2))-p(x(s-1/2))}{x(s+1/2)-x(s-1/2)},\quad x(s)=\mbox{$\frac12$}(q^s +q^{-s})\;,
\end{align*}
for every polynomial $p$. We assume that $0<q<1$. (Taking $q^s=e^{i \theta}$ we recover $\mathcal{D}_q$ as defined in  \cite[(21.6.2)]{I2005}.) The problem of characterizing such OPS was posed by Ismail (see \cite[Conjecture 24.7.8]{I2005}). The case $a=b=0$ and $c=1$ was considered by Al-Salam (see \cite{A-1995}). Recently, we addressed this problem in its full generality (see \cite{KDPconj}), which leads to a characterization of continuous $q$-Jacobi, Chebyshev of the first kind and some special cases of the Al-Salam-Chihara polynomials. Our motivation here is to obtain a full characterization of Askey-Wilson polynomials similar to \eqref{very-classical}. We define the averaging operator by
\begin{align*}
\mathcal{S}_q\, p(x(s))= \frac{1}{2}\Big( p(x(s+1/2))+p(x(s-1/2))\Big),\quad x(s)=\mbox{$\frac12$}(q^s +q^{-s})\;.
\end{align*}
For our purpose, instead of \eqref{very-classical}, let us consider the following difference equation
\begin{align}\label{open-problem}
&(az^2+bz+c)\mathcal{D}_q P_n(z)=a_n\mathcal{S}_q P_{n+1}(z)+b_n\mathcal{S}_q P_n(z)+c_n\mathcal{S}_q P_{n-1}(z),~ z=x(s)\;,
\end{align}
with $c_n\neq 0$. Our objective is then to characterize all OPS that satisfy \eqref{open-problem}. The case where $a=b=0$ and $c=1$ was solved in recently in \cite{KCDMJP2021-a}.  Using ideas developed therein we are going to solve \eqref{open-problem} in his general form. The structure of the paper is as follows. Section 2 presents some basic facts of the algebraic theory of OPS together with some useful results. Section \ref{main} contains our main result. In Section \ref{example} we present a finer result for a special case.  

\section{Background and preliminary results}
The algebraic theory of orthogonal polynomials was introduced by P. Maroni (see \cite{M1991}). Let $\mathcal{P}$ be the vector space of all polynomials with complex coefficients
and let $\mathcal{P}^*$ be its algebraic dual. A simple set in $\mathcal{P}$ is a sequence $(P_n)_{n\geq0}$ such that $\mathrm{deg}(P_n)=n$ for each $n$. A simple set $(P_n)_{n\geq0}$ is called an OPS with respect to ${\bf u}\in\mathcal{P}^*$ if 
$$
\langle{\bf u},P_nP_m\rangle=\kappa_n\delta_{n,m}\quad(m=0,1,\ldots;\;\kappa_n\in\mathbb{C}\setminus\{0\}),
$$
where $\langle{\bf u},f\rangle$ is the action of ${\bf u}$ on $f\in\mathcal{P}$. In this case, we say that ${\bf u}$ is  regular. The left multiplication of a functional ${\bf u}$ by a polynomial $\phi$ is defined by
$$
\left\langle \phi {\bf u}, f  \right\rangle =\left\langle {\bf u},\phi f  \right\rangle \quad (f\in \mathcal{P}).
$$
Consequently, if $(P_n)_{n\geq0}$ is a monic OPS with respect to ${\bf u}\in\mathcal{P}^*$, then the corresponding dual basis is explicitly given by 
\begin{align}\label{expression-an}
{\bf a}_n =\left\langle {\bf u} , P_n ^2 \right\rangle ^{-1} P_n{\bf u}.
\end{align}
Any functional ${\bf u} \in \mathcal{P}^*$ (when $\mathcal{P}$ is endowed with an appropriate strict inductive limit topology, see \cite{M1991}) can be written in the sense of the weak topology in $\mathcal{P}^*$ as 
\begin{align*}
{\bf u} = \sum_{n=0} ^{\infty} \left\langle {\bf u}, P_n \right\rangle {\bf a}_n.
\end{align*}
It is known that a monic OPS, $(P_n)_{n\geq 0}$, is characterized by the following three-term recurrence relation (TTRR):
\begin{align}\label{TTRR_relation}
P_{-1} (z)=0, \quad P_{n+1} (z) =(z-B_n)P_n (z)-C_n P_{n-1} (z) \quad (C_n \neq 0),
\end{align}
and, therefore,
\begin{align}\label{TTRR_coefficients}
B_n = \frac{\left\langle {\bf u} , zP_n ^2 \right\rangle}{\left\langle {\bf u} , P_n ^2 \right\rangle},\quad C_{n+1}  = \frac{\left\langle {\bf u} , P_{n+1} ^2 \right\rangle}{\left\langle {\bf u} , P_n ^2 \right\rangle}.
\end{align}

The Askey-Wilson and the averaging operators  induce two elements on $\mathcal{P}^*$, say $\mathbf{D}_q$ and $\mathbf{S}_q$, via the following definition (see \cite{FK-NM2011}): 
\begin{align*}
\langle \mathbf{D}_q{\bf u},f\rangle=-\langle {\bf u},\mathcal{D}_q f\rangle,\quad \langle\mathbf{S}_q{\bf u},f\rangle=\langle {\bf u},\mathcal{S}_q f\rangle.
\end{align*}

Let $f,g\in\mathcal{P}$ and ${\bf u}\in\mathcal{P}^*$. Hereafter we denote $z=x(s)=(q^s+q^{-s})/2$. Then the following properties hold (see e.g. \cite{KDP2021} and references therein):
\begin{align}
\mathcal{D}_q \big(fg\big)&= \big(\mathcal{D}_q f\big)\big(\mathcal{S}_q g\big)+\big(\mathcal{S}_q f\big)\big(\mathcal{D}_q g\big), \label{def-Dx-fg} \\[7pt]
\mathcal{S}_q \big( fg\big)&=\big(\mathcal{D}_q f\big) \big(\mathcal{D}_q g\big)\texttt{U}_2  +\big(\mathcal{S}_q f\big) \big(\mathcal{S}_q g\big), \label{def-Sx-fg} \\[7pt]
f\mathcal{D}_qg&=\mathcal{D}_q\left[ \Big(\mathcal{S}_qf-\frac{\texttt{U}_1}{\alpha}\mathcal{D}_qf \Big)g\right]-\frac{1}{\alpha}\mathcal{S}_q \Big(g\mathcal{D}_q f\Big) , \label{def-fDxg} \\[7pt]
f{\bf D}_q {\bf u}&={\bf D}_q\left(\mathcal{S}_qf~{\bf u}  \right)-{\bf S}_q\left(\mathcal{D}_qf~{\bf u}  \right), \label{def-fD_x-u}\\[7pt]
\alpha \mathbf{D}_q ^n \mathbf{S}_q {\bf u}&= \alpha_{n+1} \mathbf{S}_q \mathbf{D}_q^n {\bf u}
+\gamma_n \texttt{U}_1\mathbf{D}_q^{n+1}{\bf u}, \label{DxnSx-u} 
\end{align}
where $\alpha =(q^{1/2}+q^{-1/2})/2$, $\texttt{U}_1 (z)=(\alpha ^2 -1)z$ and $\texttt{U}_2 (z)=(\alpha ^2 -1)(z^2-1)$. It is known that 
\begin{align}
\mathcal{D}_q z^n =\gamma_n z^{n-1}+u_nz^{n-3}+\cdots,\quad \mathcal{S}_q z^n =\alpha_n z^n+\widehat{u}_nz^{n-2}+\cdots, \label{Dx-xnSx-xn}
\end{align}
where $$\alpha_n= \mbox{$\frac12$}(q^{n/2} +q^{-n/2})\;,\quad \gamma_n = \frac{q^{n/2}-q^{-n/2}}{q^{1/2}-q^{-1/2}}$$ and, $u_n$ and $\widehat{u}_n$ are some complex numbers. We set $\gamma_{-1}:=-1$ and $\alpha_{-1}:=\alpha$.
It is important to notice that if ${\bf u}$ is a linear functional such that ${\bf D}_q(\phi {\bf u})={\bf S}_q(\psi {\bf u})$, then the following relation holds (see \cite[Proposition 2: (3.8)]{FK-NM2011}) 
\begin{align}\label{key-for-second-order-relation}
\left\langle {\bf u}, (\phi \mathcal{D}^2_qf+\psi \mathcal{S}_q\mathcal{D}_qf)g  \right\rangle=\left\langle {\bf u}, (\phi \mathcal{D}^2_qg+\psi \mathcal{S}_q\mathcal{D}_qg)f  \right\rangle, \;f,g\in \mathcal{P}\;.
\end{align} 
 We denote by $P_n ^{[k]}$ $(k=0,1,\ldots)$ the monic polynomial of degree $n$ defined by
\begin{align*}
P_n ^{[k]} (z)=\frac{\mathcal{D}_q ^k P_{n+k} (z)}{ \prod_{j=1} ^k \gamma_{n+j}} =\frac{\gamma_{n} !}{\gamma_{n+k} !} \mathcal{D}_q ^k P_{n+k} (z). 
\end{align*}
Here it is understood that $\mathrm{D}_x ^0 f=f $, empty product equals one, and $\gamma_0 !=1$, $\gamma_{n+1}!=\gamma_1\cdots \gamma_n \gamma_{n+1}$. If $({\bf a}^{[k]} _n)_{n\geq 0}$ is the dual basis associated to the sequence $(P_n ^{[k]})_{n\geq 0}$, we leave it to the reader to verify that
\begin{align}
{\bf D}_q ^k {\bf a}^{[k]} _n=(-1)^k \frac{\gamma_{n+k}!}{\gamma_n ! }{\bf a}_{n+k}\quad (k=0,1,\ldots). \label{basis-Dx-derivatives}
\end{align}

In 2003, M.E.H. Ismail proved the following result (see \cite[Theorem 20.1.3]{I2005}):
\begin{theorem}\label{T}
A second order operator equation of the form
\begin{align}\label{Ismail}
\phi(z)\mathcal{D}^2_q\, Y+\psi(z) \mathcal{S}_q \mathcal{D}_q \, Y+h(z)\, Y=\lambda_n\, Y
\end{align}
has a polynomial solution $Y_n(z)$ of exact degree $n$ for each $n=0,1,\dots$, if and only if $Y_n(z)$ is a multiple of the Askey-Wilson polynomials, or special or limiting cases of them. In all these cases $\phi$, $\psi$, $h$, and $\lambda_n$ reduce to
\begin{align*}
\phi(z)&=-q^{-1/2}(2(1+\sigma_4)z^2-(\sigma_1+\sigma_3)z-1+\sigma_2-\sigma_4),\\[7pt]
\psi(z)&=\frac{2}{1-q} (2(\sigma_4-1)z+\sigma_1-\sigma_3), \quad h(z)=0,\\[7pt]
\lambda_n&=\frac{4 q(1-q^{-n})(1-\sigma_4 q^{n-1})}{(1-q)^2}, 
\end{align*}
or a special or limiting case of it, $\sigma_j$ being the jth elementary symmetric function of the Askey-Wilson parameters. 
\end{theorem}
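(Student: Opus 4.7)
The plan is to argue in both directions, with the bulk of the work on necessity (sufficiency being a direct verification from the explicit basic hypergeometric representation of Askey-Wilson polynomials). Assume \eqref{Ismail} admits a polynomial solution $Y_n$ of exact degree $n$ for every $n\geq 0$. First, I would run a degree analysis based on \eqref{Dx-xnSx-xn}: since $\mathcal{D}_q^2 Y_n$ has degree $n-2$ and $\mathcal{S}_q\mathcal{D}_q Y_n$ has degree $n-1$, matching the degree of the right-hand side $\lambda_n Y_n$ for every $n$ forces $\deg\phi\leq 2$, $\deg\psi\leq 1$, and $h$ to be a constant (indeed $h=0$, from the $n=0,1$ cases). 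Writing $\phi(z)=\phi_2z^2+\phi_1z+\phi_0$ and $\psi(z)=\psi_1z+\psi_0$, equating leading coefficients of $z^n$ in \eqref{Ismail} for a monic $Y_n$ yields
\begin{align*}
\lambda_n=\phi_2\gamma_n\gamma_{n-1}+\psi_1\alpha_{n-1}\gamma_n,
\end{align*}
which, after rewriting using the Askey-Wilson parametrization $\phi_2=-2q^{-1/2}(1+\sigma_4)$ and $\psi_1=4(\sigma_4-1)/(1-q)$, reproduces the displayed $\lambda_n$.

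Next, I would establish orthogonality. Construct a normalized functional ${\bf u}\in\mathcal{P}^*$ satisfying the Pearson-type equation ${\bf D}_q(\phi{\bf u})={\bf S}_q(\psi{\bf u})$; its existence follows by solving recursively for the moments $\langle{\bf u},z^n\rangle$, using \eqref{def-fD_x-u} together with \eqref{def-Dx-fg}--\eqref{def-fDxg} to transform the equation into an explicit triangular linear system. Denoting $L[f]=\phi\mathcal{D}_q^2f+\psi\mathcal{S}_q\mathcal{D}_qf$, the identity \eqref{key-for-second-order-relation} gives
\begin{align*}
(\lambda_n-\lambda_m)\langle{\bf u},Y_nY_m\rangle=\langle{\bf u},L[Y_n]Y_m\rangle-\langle{\bf u},L[Y_m]Y_n\rangle=0,
\end{align*}
and a direct inspection of the closed form for $\lambda_n$ (distinguishing the generic case from the terminating situation $\sigma_4=q^{-N}$) shows $\lambda_n\neq\lambda_m$ for $n\neq m$ in the regular regime, forcing $(Y_n)_{n\geq 0}$ to be orthogonal with respect to ${\bf u}$.

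Finally, I would match to the Askey-Wilson family by substituting the TTRR \eqref{TTRR_relation} into \eqref{Ismail}, expanding both sides in the basis $(Y_n)$ via \eqref{def-Dx-fg}--\eqref{def-Sx-fg}, and reading off explicit expressions for $B_n$ and $C_n$ as rational functions of $\phi_2,\phi_1,\phi_0,\psi_1,\psi_0$ and $q^n$. These must coincide with the known recurrence coefficients of the Askey-Wilson polynomials parametrized by $(a,b,c,d)$, which pins down $\phi$ and $\psi$ in terms of $\sigma_1,\sigma_2,\sigma_3,\sigma_4$ as stated. The main obstacle I expect is the case analysis for degenerate configurations: when $\phi_2=0$, when the discriminant of $\phi$ vanishes, or when $\psi_1=0$, one obtains a linear or constant $\phi$ or $\psi$, and the corresponding $(Y_n)$ reduces to a limiting family (Al-Salam--Chihara, continuous big/little $q$-Jacobi, continuous $q$-Hermite, etc.); careful bookkeeping is needed to identify each such sub-family as a genuine specialization or limit of Askey-Wilson polynomials and to verify regularity ($C_n\neq 0$) throughout, which in turn constrains the admissible parameter sectors appearing in the statement.
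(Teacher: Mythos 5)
Note first that the paper does not prove this statement at all: Theorem \ref{T} is quoted verbatim from Ismail's book \cite[Theorem 20.1.3]{I2005} and used as a black box, so your attempt can only be judged on its own merits. Your opening steps are sound: the degree analysis forcing $\deg\phi\le 2$, $\deg\psi\le 1$, $h$ constant (absorbed into $\lambda_n$), and the leading-coefficient identity $\lambda_n=\phi_2\gamma_n\gamma_{n-1}+\psi_1\alpha_{n-1}\gamma_n$ follow routinely from \eqref{Dx-xnSx-xn}. The gaps come after that. Constructing ${\bf u}$ with ${\bf D}_q(\phi{\bf u})={\bf S}_q(\psi{\bf u})$ by a triangular moment recursion needs the coefficient $\phi_2\gamma_n+\psi_1\alpha_n$ of the top moment to be nonzero, and, much more seriously, you never establish that ${\bf u}$ is \emph{regular}, i.e.\ $\langle{\bf u},Y_n^2\rangle\neq 0$. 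The relation \eqref{key-for-second-order-relation} only yields $\langle{\bf u},Y_nY_m\rangle=0$ for $\lambda_n\neq\lambda_m$; without regularity this does not make $(Y_n)$ an OPS, and regularity can genuinely fail for admissible $(\phi,\psi)$ --- these failures are exactly where the ``special or limiting cases'' in the statement live, so an argument that funnels everything through orthogonality cannot, as written, cover the full claim. Moreover $\lambda_n=\lambda_m$ does occur (precisely when $\sigma_4=q^{1-n-m}$), so even pairwise orthogonality requires parameter restrictions you do not address.

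The second and larger gap is the final identification. Saying that the recurrence coefficients $B_n,C_n$ extracted from \eqref{Ismail} and \eqref{TTRR_relation} ``must coincide with the known recurrence coefficients of the Askey--Wilson polynomials'' is a restatement of the theorem, not a proof: one has to show that for \emph{every} admissible pair $(\phi,\psi)$ there exist (possibly coincident, zero, or limiting) Askey--Wilson parameters solving the inverse parametrization problem --- in this paper that step is exactly what produces the quartic \eqref{equation-degree-4-q-a}, and the classification of OPS satisfying ${\bf D}_q(\phi{\bf u})={\bf S}_q(\psi{\bf u})$ is imported as Theorem \ref{main-Thm1} from \cite{KDP2021} precisely because it is a substantial result, not a reading-off exercise. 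You acknowledge the degenerate cases ($\phi_2=0$, $\psi_1=0$, vanishing discriminant) only as ``careful bookkeeping,'' but they are the bulk of the work. A route that avoids the regularity issue altogether is worth noting: when the $\lambda_n$ are pairwise distinct, the operator $\phi\mathcal{D}_q^2+\psi\mathcal{S}_q\mathcal{D}_q$ is triangular on polynomials with diagonal $\lambda_0,\lambda_1,\dots$, so the monic eigenpolynomial of degree $n$ is unique; it then suffices to exhibit Askey--Wilson parameters reproducing the given $(\phi,\psi)$ and invoke the known fact that Askey--Wilson polynomials satisfy \eqref{Ismail}. But this still requires the parameter-matching quartic and the analysis of the coincidence cases $\lambda_n=\lambda_m$, none of which your sketch supplies.
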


\section{Main result}\label{main}
We are now in the position to prove our main result. 
\begin{theorem}\label{propo-sol-q-quadratic}
If $(P_n)_{n\geq 0}$ is a monic OPS such that 
\begin{align}
(az^2+bz+c)\mathcal{D}_qP_{n}(z)= a_n\mathcal{S}_qP_{n+1}(z)+b_n\mathcal{S}_q P_{n}(z)+c_n\mathcal{S}_qP_{n-1}(z)\quad (n=0,1,\ldots)\;,\label{equation-case-deg-is-two}
\end{align}
with $c_n\neq 0$ for $n=0,1,\ldots$, where the constant parameters $a$, $b$ and $c$ are chosen such that
\small{
\begin{align}
&(4\alpha^2-1)aC_2C_3 
+\frac{r_3}{2}\Big[(B_0+B_1)^2 +4\alpha^2 (C_1-B_0B_1 +\alpha^2 -1) -2(2\alpha^2-1)C_2  \Big]=0\;,\label{condition-case-deg-is-two-1}
\end{align}
}
whenever $a\neq 0$, and
\begin{align}
aC_2C_3\Big(b_2+2aB_2+\frac{b}{\alpha}\Big) 
-r_3\left(a(B_2+B_1)C_2  +\frac{b}{\alpha}C_2  -\frac{r_2}{2}(B_1-B_0) \right)=0\;, \label{condition-case-deg-is-two-2}
\end{align}
$r_i=c_i+2aC_i$, $i=2,3$, then $(P_n)_{n\geq 0}$ are multiple of Askey-Wilson polynomials, or special or limiting cases of them. Moreover $(P_n)_{n\geq 0}$ satisfy \eqref{Ismail} with
\begin{align}
&\phi(z)=\mathfrak{a}z^2 +\mathfrak{b}z+\mathfrak{c}\;,~\psi(z)=z-B_0\;,~h(z)=0\;,~ \lambda_n=\gamma_n (\mathfrak{a}\gamma_{n-1}+\alpha_{n-1})\;,\label{expression-phi-psi-general-case}
\end{align}
where
\begin{align*}
&\mathfrak{a}=-\frac{aC_3 +(\alpha^2-1)r_3}{\alpha r_3}\;;\\
&\mathfrak{b}=-\frac{1}{2\alpha}\left(\Big(1-2a\frac{C_3}{r_3}\Big)\Big(B_0+B_1\Big)-2\alpha^2B_0  \right) \;;\\
&\mathfrak{c}=-\frac{1}{2\alpha}\left(\Big(1-2a\frac{C_3}{r_3}\Big)\Big(C_1-B_0B_1\Big)+C_1 +B_0 ^2\right)   \;;
\end{align*}
being $B_0$, $B_1$, $C_1$, $C_2$ and $C_3$ coefficients for the TTRR relation \eqref{TTRR_relation} satisfied by $(P_n)_{n\geq 0}$.
\end{theorem}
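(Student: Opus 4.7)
The plan is to reduce the structural relation \eqref{equation-case-deg-is-two} to a Pearson-type distributional equation ${\bf D}_q(\Phi\,{\bf u})={\bf S}_q(\Psi\,{\bf u})$ for the orthogonality functional ${\bf u}$ of $(P_n)_{n\geq 0}$, with $\deg\Phi\leq 2$ and $\deg\Psi\leq 1$, and then to apply Ismail's Theorem~\ref{T} via identity \eqref{key-for-second-order-relation}. This follows the blueprint used in \cite{KCDMJP2021-a} for the special case $a=b=0$, $c=1$; the point here is to show that the same strategy survives when $\phi(z)=az^2+bz+c$ has general degree two, provided the compatibility conditions \eqref{condition-case-deg-is-two-1}--\eqref{condition-case-deg-is-two-2} are imposed.

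I would begin by extracting information from \eqref{equation-case-deg-is-two} at small $n$: at $n=0$, $\mathcal{D}_qP_0=0$ forces $a_0=b_0=0$, and matching leading coefficients via \eqref{Dx-xnSx-xn} for general $n$ yields $a_n=a\gamma_n/\alpha_{n+1}$, while the subleading matching together with \eqref{TTRR_relation} determines $b_n$ and $c_n$ explicitly in terms of $a,b,c$ and the TTRR coefficients $B_k,C_k$. The key structural observation is that, since $\phi$ has degree at most two and $(P_n)$ is orthogonal, the functional $\phi\,{\bf u}$ admits a finite dual-basis expansion $\phi\,{\bf u}=\sum_{k=0}^{2}\lambda_k{\bf a}_k$ with $\lambda_k=\langle{\bf u},\phi P_k\rangle$. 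Consequently ${\bf D}_q(\phi\,{\bf u})$ lives in the finite-dimensional subspace spanned by ${\bf D}_q{\bf a}_0,{\bf D}_q{\bf a}_1,{\bf D}_q{\bf a}_2$, which by \eqref{basis-Dx-derivatives} relates directly to the dual basis of the shifted family $(P_n^{[1]})$.

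Next, I would translate \eqref{equation-case-deg-is-two} into a statement that a certain combination of $\phi\,{\bf D}_q{\bf u}$ and ${\bf S}_q{\bf u}$ vanishes in its tail. Multiplying \eqref{equation-case-deg-is-two} by $P_m$, applying ${\bf u}$, and using \eqref{def-fD_x-u} to rewrite $\phi\,{\bf D}_q{\bf u}={\bf D}_q(\mathcal{S}_q\phi\cdot{\bf u})-{\bf S}_q(\mathcal{D}_q\phi\cdot{\bf u})$, then projecting onto the dual basis \eqref{expression-an}, one extracts candidate polynomials $\Phi,\Psi$ and a candidate identity ${\bf D}_q(\Phi\,{\bf u})={\bf S}_q(\Psi\,{\bf u})$. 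Testing this candidate by pairing with $P_0,P_1,P_2,P_3$ yields a finite system of linear relations in the coefficients of $\Phi,\Psi$ and in the TTRR data; the overdetermined part of the system at the $P_3$-level is exactly \eqref{condition-case-deg-is-two-1} (when $a\neq 0$) and \eqref{condition-case-deg-is-two-2}. Under these hypotheses one reads off $\Psi(z)=z-B_0$ and the closed-form formulas \eqref{expression-phi-psi-general-case} for $\mathfrak a,\mathfrak b,\mathfrak c$.

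With the Pearson equation in place, \eqref{key-for-second-order-relation} shows that $\Phi\,\mathcal{D}_q^2P_n+\Psi\,\mathcal{S}_q\mathcal{D}_qP_n$ is a polynomial of degree at most $n$ that is ${\bf u}$-orthogonal to each $P_m$ with $m<n$, hence equals $\lambda_n P_n$; comparing leading coefficients via \eqref{Dx-xnSx-xn} gives $\lambda_n=\gamma_n(\mathfrak a\gamma_{n-1}+\alpha_{n-1})$, and Theorem~\ref{T} then identifies $(P_n)_{n\geq 0}$ as a multiple of Askey--Wilson polynomials, or a special or limiting case. I expect the principal obstacle to be the middle step: tracking the coefficient of each ${\bf a}_k$ through the $\mathcal{D}_q$--$\mathcal{S}_q$ mixing (the $q$-Leibniz rules \eqref{def-Dx-fg}--\eqref{def-Sx-fg} couple both operators, unlike the $\mathrm{D}$-classical setting), and verifying that \eqref{condition-case-deg-is-two-1}--\eqref{condition-case-deg-is-two-2} are exactly the linear obstructions that allow $\Phi,\Psi$ to exist with the forms announced in \eqref{expression-phi-psi-general-case}.
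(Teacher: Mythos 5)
Your overall architecture coincides with the paper's: derive a Pearson-type distributional equation ${\bf D}_q(\phi\,{\bf u})={\bf S}_q(\psi\,{\bf u})$, then use \eqref{key-for-second-order-relation} to turn it into the second-order equation $\phi\,\mathcal{D}_q^2P_n+\psi\,\mathcal{S}_q\mathcal{D}_qP_n=\lambda_nP_n$ with $\lambda_n=\gamma_n(\mathfrak a\gamma_{n-1}+\alpha_{n-1})$, and invoke Theorem \ref{T}. Your endgame is essentially identical to the paper's (including the need to know $\mathfrak a\gamma_n+\alpha_n\neq 0$, which the paper gets from regularity via \cite[Theorem 4.1]{KDP2021} — a point you should not omit). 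The initial coefficient identifications ($a_n=a\gamma_n/\alpha_{n+1}$, etc.) also match.

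The gap is in the central step, which is the heart of the theorem, and as described your plan would not close it. A distributional identity ${\bf D}_q(\Phi\,{\bf u})={\bf S}_q(\Psi\,{\bf u})$ means $\langle{\bf u},\Phi\,\mathcal{D}_qP_m+\Psi\,\mathcal{S}_qP_m\rangle=0$ for \emph{every} $m$; testing a candidate against $P_0,\dots,P_3$ only produces necessary conditions and says nothing about the infinitely many remaining pairings, and since $\Phi$ is not proportional to $az^2+bz+c$ you cannot simply substitute \eqref{equation-case-deg-is-two} to dispose of them. Likewise, your ``key structural observation'' that $(az^2+bz+c){\bf u}=\sum_{k=0}^2\lambda_k{\bf a}_k$ is true for any regular ${\bf u}$ and does not exploit the hypothesis, and \eqref{basis-Dx-derivatives} expresses ${\bf D}_q{\bf a}_n^{[1]}$ in terms of ${\bf a}_{n+1}$, not the converse, so it does not by itself confine ${\bf D}_q(\phi{\bf u})$ the way you suggest. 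What the paper actually does is: rewrite \eqref{equation-case-deg-is-two} via \eqref{def-fDxg} as the mixed relation \eqref{equation-bis} between $\mathcal{S}_qP_j$ and $P_j^{[1]}$; introduce the auxiliary simple set $Q_n=\sum_{j=n-2}^{n}a_{n-1,j}P_j$ with dual basis $({\bf r}_n)$ and deduce the \emph{finite} expansions \eqref{basis-relation-an-with-rn-1}--\eqref{basis-relation-with-rn-2} of ${\bf a}_n$ and ${\bf S}_q{\bf a}_n^{[1]}$ (this is where the whole tail is controlled); use conditions \eqref{condition-case-deg-is-two-1}--\eqref{condition-case-deg-is-two-2} precisely to annihilate the residual ${\bf r}_3$ and ${\bf r}_4$ coefficients, obtaining ${\bf S}_q{\bf a}_0^{[1]}={\bf a}_0+\frac12(B_1-B_0){\bf a}_1+\frac{(2\alpha^2-1)A}{(4\alpha^2-1)a}{\bf a}_2$; and only then apply ${\bf D}_q$, using the commutation relations \eqref{DxnSx-u}, \eqref{basis-Dx-derivatives} and \eqref{U1D-x-a1}, together with \eqref{expression-an} and \eqref{TTRR_coefficients}, to land on ${\bf D}_q(\phi{\bf u})={\bf S}_q(\psi{\bf u})$ with the explicit $\mathfrak a,\mathfrak b,\mathfrak c$ and $\psi(z)=z-B_0$. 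Without an argument of this type (or some substitute that controls all moments at once), your proposal establishes only that the stated $\Phi,\Psi$ are consistent with finitely many test relations, not the Pearson equation itself.
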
  

\begin{proof}
Let $(P_n)_{n\geq 0}$ be a monic OPS with respect to the functional ${\bf u} \in \mathcal{P}^*$ and satisfying \eqref{equation-case-deg-is-two}. Set $\pi_2(z)=az^2+bz+c$.  Using \eqref{def-fDxg}, we obtain
\begin{align}\label{start-point-01}
\pi_2\mathcal{D}_q P_n= \mathcal{D}_q \left[ \Big(\mathcal{S}_q\pi_2 -\frac{\texttt{U}_1}{\alpha}\mathcal{D}_q\pi_2 \Big)P_n\right] -\frac{1}{\alpha}\mathcal{S}_q\Big( \mathcal{D}_q\pi_2~P_n  \Big)\;.
\end{align}
By direct computations we obtain $\mathcal{D}_q \pi_2(z)=2\alpha az+b$ and $\mathcal{S}_q\pi_2(z)=a(2\alpha^2-1)z^2+\alpha bz +c +a(1-\alpha^2)$. Therefore 
$$
\mathcal{S}_q\pi_2 -\frac{\texttt{U}_1}{\alpha}\mathcal{D}_q\pi_2 = az^2+\frac{b}{\alpha}z +c +a(1-\alpha^2)\;.
$$
Hence from \eqref{start-point-01}, using the TTRR \eqref{TTRR_relation}, we can rewrite \eqref{equation-case-deg-is-two} as the following
\begin{align}\label{equation-bis}
\sum_{j=n-1} ^{n+1} a_{n,j}\mathcal{S}_qP_j(z)=\sum_{j=n-3} ^{n+1} b_{n,j}P_{j} ^{[1]}(z) \quad \quad (n=0,1,\ldots)\;;
\end{align}
where
\begin{align*}
&a_{n,n+1}=2a+a_n,~a_{n,n}=b_n+2aB_n+\frac{b}{\alpha},~a_{n,n-1}=c_n+2aC_n,\\
&b_{n,n+1}=a\gamma_{n+2}, ~b_{n,n}=\gamma_{n+1}\Big(a(B_{n+1}+B_n)+\frac{b}{\alpha}  \Big),\\
&b_{n,n-2}=\gamma_{n-1}C_n\Big(a(B_{n-1}+B_n)+\frac{b}{\alpha}  \Big),~b_{n,n-3}=a\gamma_{n-2}C_nC_{n-1}\\
&b_{n,n-1}=\gamma_n\left(a(C_{n+1}+B_n ^2+C_n )+\frac{b}{\alpha}B_n +c+a(1-\alpha^2)   \right)\;.
\end{align*}
Let us write $P_n(z)=z^n +f_nz^{n-1}+\ldots$, for $n=0,1,\ldots$, with $f_n=-\sum_{j=0} ^{n-1}B_j$.  Then using \eqref{Dx-xnSx-xn}, we identify the coefficients of the terms in $z^{n+1}$ and in $z^n$ in \eqref{equation-case-deg-is-two} to obtain
\begin{align}
a_n=a\frac{\gamma_n}{\alpha_{n+1}}\;,~
\alpha_n\alpha_{n+1} b_n=\gamma_n(a\alpha_nB_n +b\alpha_{n+1})+a\alpha\sum_{j=0} ^{n-1}B_j \;,
\end{align}
for $n=0,1,\ldots$. Assume now that $a\neq 0$ (the case where $a=0$ follows the same idea as below) and define $$Q_n(z)=\sum_{j=n-2} ^n a_{n-1,j}P_j(z)~~\quad(n=0,1,\ldots)\;.$$ Then $(Q_n)_{n\geq 0}$ is a simple set of polynomials and so let $({\bf a}_n)_{n\geq 0}$, $({\bf a}_n ^{[1]})_{n\geq 0}$ and $({\bf r}_n)_{n\geq 0}$ be the associated basis to the sequences $(P_n)_{n\geq 0}$, $(P_n ^{[1]})_{n\geq 0}$ and $(Q_n)_{n\geq 0}$, respectively. 
We then claim that 
\begin{align}
&{\bf a}_n=a_{n-1,n}{\bf r}_n+a_{n,n}{\bf r}_{n+1}+a_{n+1,n}{\bf r}_{n+2}\;;\label{basis-relation-an-with-rn-1}\\
&{\bf \mathrm{S}}_q {\bf a}_n ^{[1]}=b_{n-1,n}{\bf r}_n+b_{n,n}{\bf r}_{n+1}+b_{n+1,n}{\bf r}_{n+2}+b_{n+2,n}{\bf r}_{n+3}+b_{n+3,n}{\bf r}_{n+4}\;,\label{basis-relation-with-rn-2}
\end{align}
for $n=0,1,\ldots$.

Indeed, we have
$$
\left\langle {\bf a}_n,Q_l  \right\rangle=\sum_{j=l-2} ^l a_{l-1,j}\left\langle {\bf a}_n,P_j  \right\rangle=\sum_{j=l-2} ^l a_{l-1,j}\delta_{n,j}=\left\{
    \begin{array}{ll}
        a_{l-1,n} & \mbox{if } n\leq l\leq n+2 \\
        0 & \mbox{otherwise.}
    \end{array}
\right.
$$
Similarly using \eqref{equation-bis}, we obtain
\begin{align*}
\left\langle {\bf S}_q {\bf a}_n ^{[1]},Q_l  \right\rangle =\left\langle  {\bf a}_n ^{[1]},\mathcal{S}_q Q_l  \right\rangle &=\sum_{j=l-4} ^l b_{l-1,j}\left\langle {\bf a}_n ^{[1]},P_j ^{[1]}  \right\rangle\\
&=\sum_{j=l-4} ^l b_{l-1,j}\delta_{n,j} =\left\{
    \begin{array}{ll}
        b_{l-1,n} & \mbox{if } n\leq l\leq n+4 \\
        0 & \mbox{otherwise.}
    \end{array}
\right.
\end{align*}
Hence \eqref{basis-relation-an-with-rn-1}--\eqref{basis-relation-with-rn-2} follow by writing
$$
{\bf a}_n =\sum_{l=0} ^{+\infty} \left\langle {\bf a}_n ,Q_l  \right\rangle {\bf r}_l ,~\quad {\bf S}_q {\bf a}_n ^{[1]} =\sum_{l=0} ^{+\infty} \left\langle {\bf S}_q {\bf a}_n ^{[1]} ,Q_l  \right\rangle {\bf r}_l \quad \quad (n=0,1,\ldots)\;,
$$
and taking into account what is preceding. Taking $n=0,1,2$ in \eqref{basis-relation-an-with-rn-1} and $n=0$ in \eqref{basis-relation-with-rn-2}, we obtain the following system.
\begin{align}
{\bf a}_0&=a{\bf r}_0+a_{0,0}{\bf r}_{1}+a_{1,0}{\bf r}_{2}  \;;\label{basis-eqt-1}\\
{\bf a}_1&=2a{\bf r}_1+a_{1,1}{\bf r}_{2}+a_{2,1}{\bf r}_{3} \;;\label{basis-eqt-2}\\
{\bf a}_2&=\frac{4\alpha^2-1}{2\alpha^2-1}a{\bf r}_2+a_{2,2}{\bf r}_{3}+a_{3,2}{\bf r}_{4}  \;;\label{basis-eqt-3}\\
{\bf S}_q {\bf a}_0 ^{[1]}&=a{\bf r}_0+b_{0,0}{\bf r}_{1}+b_{1,0}{\bf r}_{2}+b_{2,0}{\bf r}_{3}+b_{3,0}{\bf r}_{4}   \label{basis-eqt-4}     \;.
\end{align}
By subtracting \eqref{basis-eqt-4} to \eqref{basis-eqt-1}, we obtain
\begin{align}
-{\bf a}_0+{\bf S}_q {\bf a}_0 ^{[1]}&=a(B_1-B_0){\bf r}_1+(b_{1,0}-a_{1,0}){\bf r}_{2}+b_{2,0}{\bf r}_{3}+b_{3,0}{\bf r}_{4}   \label{basis-eqt-5}     \;.
\end{align}
We now combine this with \eqref{basis-eqt-2}, we obtain
\begin{align}
-\frac{1}{2}(B_1-B_0){\bf a}_1-{\bf a}_0+{\bf S}_q {\bf a}_0 ^{[1]}&=A{\bf r}_{2}+\Big(b_{2,0}-\frac{1}{2}(B_1-B_0)a_{2,1}\Big){\bf r}_{3}+b_{3,0}{\bf r}_{4}   \label{basis-eqt-6}     \;;
\end{align}
where $A:=b_{1,0}-a_{1,0}-\small{\frac12}(B_1-B_0)a_{1,1}$. Taking $n=1$ in \eqref{equation-case-deg-is-two} we obtain
\begin{align*}
&a_1 =\frac{a}{2\alpha^2-1}\;,~~b_1=\frac{a(B_0+B_1)}{2\alpha^2-1}+\frac{b}{\alpha}\;,~~c_1=\frac{a(B_0 ^2 +C_1 +\alpha^2-1)}{2\alpha^2-1}+c+\frac{b}{\alpha}B_0\;.
\end{align*}
With this we have
\begin{align*}
A=&-\frac{a}{2(2\alpha^2-1)}\Big((B_1+B_0)^2 -4\alpha^2(B_0B_1-C_1+1-\alpha^2) \Big)+aC_2\;.
\end{align*}
Note that $A\neq 0$ according to \eqref{condition-case-deg-is-two-1}. Now, combining \eqref{basis-eqt-6} with \eqref{basis-eqt-3} yields
\begin{align*}
&-\frac{(2\alpha^2-1)A}{(4\alpha^2 -1)a}{\bf a}_2 -\frac{1}{2}(B_1-B_0){\bf a}_1-{\bf a}_0 +{\bf S}_q {\bf a}_0 ^{[1]}\\
&=\Big(b_{2,0}-\mbox{$\frac{1}{2}$}(B_1-B_0)a_{2,1}-\frac{(2\alpha^2-1)A}{(4\alpha^2-1)a}a_{2,2} \Big){\bf r}_3+\Big(b_{3,0}-\frac{(2\alpha^2-1)A}{(4\alpha^2-1)a}a_{3,2}\Big){\bf r}_4\;.
\end{align*}
Using expressions of the coefficients $a_{n,j}$ and $b_{n,j}$ given by \eqref{equation-bis}, we obtain 
$$b_{3,0}-\frac{(2\alpha^2-1)A}{(4\alpha^2-1)a}a_{3,2}=aC_2C_3-\frac{(2\alpha^2-1)A}{(4\alpha^2-1)a}(c_3+2aC_3)=0\;,$$
according to condition \eqref{condition-case-deg-is-two-1}. Similarly, using this relation we also obtain 
\begin{align*}
b_{2,0}-\mbox{$\frac{1}{2}$}(B_1-B_0)a_{2,1}-\frac{(2\alpha^2-1)A}{(4\alpha^2-1)a}a_{2,2}=\Big(a(B_1+B_2)+\frac{b}{\alpha}\Big)C_2\\
-\mbox{$\frac{1}{2}$}(B_1-B_0)(c_2+2aC_2)-\frac{aC_2C_3}{c_3+2aC_3}\Big(b_2+2aB_2+\frac{b}{\alpha}\Big) =0\;,
\end{align*} 
according to condition \eqref{condition-case-deg-is-two-2}. Hence
\begin{align}\label{basis-final}
{\bf S}_q {\bf a}_0 ^{[1]}={\bf a}_0 +\frac{1}{2}(B_1-B_0){\bf a}_1 +\frac{(2\alpha^2-1)A}{(4\alpha^2 -1)a}{\bf a}_2\;.
\end{align}
On the other hand, by direct computations we have $\mathcal{D}_q\texttt{U}_1=\alpha^2-1$ and $\mathcal{S}_q\texttt{U}_1=\alpha\texttt{U}_1$, and therefore, using \eqref{def-fD_x-u} with $f$ and ${\bf u}$ replaced by $\texttt{U}_1$ and ${\bf a}_1$, respectively, we obtain
\begin{align}
\texttt{U}_1{\bf D}_q{\bf a}_1=\alpha {\bf D}_q(\texttt{U}_1{\bf a}_1)-(\alpha^2-1){\bf S}_q{\bf a}_1\;.\label{U1D-x-a1}
\end{align}
We now apply ${\bf D}_q$ to \eqref{basis-final} using successively \eqref{DxnSx-u} for $n=1$ with ${\bf u}$ replaced by ${\bf a}_0$, \eqref{basis-Dx-derivatives} for $k=1$ and $n=0$, and \eqref{U1D-x-a1} to have
\begin{align*}
{\bf D}_q \left[{\bf a}_0 +\frac{1}{2}(B_1-B_0){\bf a}_1 +\frac{(2\alpha^2-1)A}{(4\alpha^2 -1)a}{\bf a}_2  \right]&={\bf D}_q{\bf S}_q{\bf a}_0 ^{[1]}=\frac{\alpha_2}{\alpha}{\bf S}_q{\bf D}_q{\bf a}_0 ^{[1]} +\frac{\texttt{U}_1}{\alpha}{\bf D}_q ^2 {\bf a}_0 ^{[1]} \\
&=-\frac{2\alpha^2-1}{\alpha}{\bf S}_q{\bf a}_1  -\frac{\texttt{U}_1}{\alpha}{\bf D}_q  {\bf a}_1 \\
&=-\alpha {\bf S}_q{\bf a}_1 -{\bf D}_q\big(\texttt{U}_1{\bf a}_1\big) \;.
\end{align*}
Hence
\begin{align*}
{\bf D}_q \left[{\bf a}_0 +\frac{1}{2}\Big((B_1-B_0)+2\texttt{U}_1\Big){\bf a}_1 +\frac{(2\alpha^2 -1)A}{(4\alpha^2 -1)a}{\bf a}_2  \right]= -\alpha {\bf S}_q {\bf a}_1\;.
\end{align*}
So using \eqref{expression-an} and \eqref{TTRR_coefficients} we obtain
$${\bf D}_q(\phi {\bf u})={\bf S}_q(\psi {\bf u})\;,$$
where $\phi$ and $\psi$ are given in \eqref{expression-phi-psi-general-case}. Since ${\bf u}$ is regular, then from \cite[Theorem 4.1]{KDP2021} we obtain $\mathfrak{a}\gamma_n+\alpha_n\neq 0$, $n=0,1,\ldots$. From \eqref{key-for-second-order-relation} with $f=P_n$ and $g=P_l$ we obtain
\begin{align*}
\phi \mathcal{D}^2_qP_n+\psi \mathcal{S}_q\mathcal{D}_qP_n=\sum_{l=0} ^n a_{n,l}P_l\;,
\end{align*}
where 
$\left\langle {\bf u}, P_l ^2\right\rangle a_{n,l} =\left\langle {\bf u}, (\phi \mathcal{D}^2_qP_n+\psi \mathcal{S}_q\mathcal{D}_qP_n)P_l  \right\rangle
=\left\langle {\bf u}, (\phi \mathcal{D}^2_qP_l+\psi \mathcal{S}_q\mathcal{D}_qP_l)P_n  \right\rangle $. Taking into account 
$\phi \mathcal{D}^2_qP_n+\psi \mathcal{S}_q\mathcal{D}_qP_n=\gamma_n(\mathfrak{a}\gamma_{n-1}+\alpha_{n-1})z^n+\textit{lower term degrees}$, we obtain $a_{n,l}=0$ if $l<n$ and $a_{n,n}=\gamma_n(\mathfrak{a}\gamma_{n-1}+\alpha_{n-1})\neq 0$. Therefore
$$\phi(z) \mathcal{D}_q ^2 P_n(z) + \psi(z) \mathcal{S}_q\mathcal{D}_qP_n(z) =a_{n,n}P_n(z)~~\quad(n=1,2,\ldots)\;,$$ 
and the desired result follows by Theorem \ref{T}.

\end{proof}

\section{A special case}\label{example}
In this section we consider the special case of \eqref{equation-case-deg-is-two} where $a=0$ in other to state a finer result. For this purpose we use the following result.
\begin{theorem}\label{main-Thm1}\cite{KDP2021}
Let $(P_n)_{n\geq 0}$ be a monic OPS with respect to ${\bf u} \in \mathcal{P}^*$. 
Suppose that ${\bf u}$ satisfies the distributional equation
$${\bf D}_q(\phi {\bf u})={\bf S}_q(\psi {\bf u})\;,$$
where $\phi(z)=az^2+bz+c$ and $\psi(z)=dz+e$, with $d\neq0$.
Then $(P_n)_{n\geq 0}$ satisfies \eqref{TTRR_relation} with
\begin{align}
B_n  = \frac{\gamma_n e_{n-1}}{d_{2n-2}}
-\frac{\gamma_{n+1}e_n}{d_{2n}},\quad
C_{n+1}  =-\frac{\gamma_{n+1}d_{n-1}}{d_{2n-1}d_{2n+1}}\phi^{[n]}\left( -\frac{e_{n}}{d_{2n}}\right),\label{Bn-Cn-Dx}
\end{align}
 where $d_n=a\gamma_n+d\alpha_n$, $e_n=b\gamma_n+e\alpha_n$, and \begin{align*}
\phi^{[n]}(z)=\big(d(\alpha^2-1)\gamma_{2n}+a\alpha_{2n}\big)
\big(z^2-1/2\big)+\big(b\alpha_n+e(\alpha^2-1)\gamma_n\big)z+ c+a/2,
\end{align*}
\end{theorem}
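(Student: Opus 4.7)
The overall plan is to transform the structure relation \eqref{equation-case-deg-is-two} into a Pearson-type distributional equation $\mathbf{D}_q(\phi\mathbf{u})=\mathbf{S}_q(\psi\mathbf{u})$ for the regular functional $\mathbf{u}$ that makes $(P_n)$ orthogonal, with $\phi$ and $\psi$ as prescribed in \eqref{expression-phi-psi-general-case}, and then to apply Theorem \ref{T} to identify $(P_n)$ as (special or limiting cases of) Askey-Wilson polynomials. The second-order $q$-equation with $h=0$ and $\lambda_n=\gamma_n(\mathfrak{a}\gamma_{n-1}+\alpha_{n-1})$ will be read off from the distributional equation via the symmetry identity \eqref{key-for-second-order-relation}.

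First I would rewrite the left-hand side of \eqref{equation-case-deg-is-two} by the product rule \eqref{def-fDxg} with $f=\pi_2=az^2+bz+c$ and $g=P_n$. After using \eqref{TTRR_relation} to move the spurious $P_{n-1}$ terms into the basis $(P_k^{[1]})$, and after matching the coefficients of $z^{n+1}$ and $z^n$ via \eqref{Dx-xnSx-xn} to pin down $a_n$ and $b_n$ in terms of $a$, $b$ and the TTRR data, I obtain the banded identity
\[\sum_{j=n-1}^{n+1}a_{n,j}\,\mathcal{S}_qP_j(z)=\sum_{j=n-3}^{n+1}b_{n,j}\,P_j^{[1]}(z),\]
with explicit formulas for $a_{n,j}$ and $b_{n,j}$.

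Next I would introduce the simple set $Q_n:=\sum_{j=n-2}^{n}a_{n-1,j}P_j$ (assuming $a\neq0$; the case $a=0$ will be treated similarly and is also covered by Section \ref{example}) and its dual basis $(\mathbf{r}_n)$. Testing $\mathbf{a}_n$ and $\mathbf{S}_q\mathbf{a}_n^{[1]}$ against each $Q_l$ and using the above identity produces banded expansions of $\mathbf{a}_n$ and $\mathbf{S}_q\mathbf{a}_n^{[1]}$ in $(\mathbf{r}_n)$. Writing these out for $n=0,1,2$ and $n=0$ respectively gives four relations involving $\mathbf{r}_0,\dots,\mathbf{r}_4$. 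Taking an appropriate linear combination, the hypotheses \eqref{condition-case-deg-is-two-1} and \eqref{condition-case-deg-is-two-2} are exactly the algebraic conditions that cancel the coefficients of $\mathbf{r}_4$ and then $\mathbf{r}_3$, leaving a clean identity of the form
\[\mathbf{S}_q\mathbf{a}_0^{[1]}=\mathbf{a}_0+\tfrac{1}{2}(B_1-B_0)\,\mathbf{a}_1+\kappa\,\mathbf{a}_2,\]
where $\kappa$ is an explicit rational function of $a,B_0,B_1,C_1,C_2,C_3$.

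Finally I would apply $\mathbf{D}_q$ to this identity. Using \eqref{DxnSx-u} to convert $\mathbf{D}_q\mathbf{S}_q\mathbf{a}_0^{[1]}$ into $\mathbf{S}_q\mathbf{D}_q\mathbf{a}_0^{[1]}$ plus a $\texttt{U}_1\mathbf{D}_q^2\mathbf{a}_0^{[1]}$ term, then \eqref{basis-Dx-derivatives} to replace $\mathbf{D}_q\mathbf{a}_0^{[1]}$ by $-\mathbf{a}_1$, and \eqref{def-fD_x-u} to reshape $\texttt{U}_1\mathbf{D}_q\mathbf{a}_1$, the resulting identity, after substituting $\mathbf{a}_n=\langle\mathbf{u},P_n^2\rangle^{-1}P_n\mathbf{u}$ via \eqref{expression-an} and simplifying with \eqref{TTRR_coefficients}, becomes $\mathbf{D}_q(\phi\mathbf{u})=\mathbf{S}_q(\psi\mathbf{u})$ with $\phi$, $\psi$ as in \eqref{expression-phi-psi-general-case}. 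Since $\mathbf{u}$ is regular, Theorem 4.1 of \cite{KDP2021} guarantees $\mathfrak{a}\gamma_n+\alpha_n\neq 0$; then \eqref{key-for-second-order-relation} with $f=P_n$, $g=P_l$ forces $\phi\mathcal{D}_q^2P_n+\psi\mathcal{S}_q\mathcal{D}_qP_n$ to be a multiple of $P_n$ with eigenvalue $\lambda_n=\gamma_n(\mathfrak{a}\gamma_{n-1}+\alpha_{n-1})$ (by matching leading terms), and Theorem \ref{T} completes the identification. The main obstacle is the bookkeeping that verifies \eqref{condition-case-deg-is-two-1}--\eqref{condition-case-deg-is-two-2} are precisely the two conditions needed to eliminate $\mathbf{r}_3$ and $\mathbf{r}_4$: this requires computing $a_1,b_1,c_1$ explicitly from the structure relation at $n=1$, and then cross-checking the quantities $A$, $b_{2,0}-\tfrac{1}{2}(B_1-B_0)a_{2,1}$, $b_{3,0}$, $a_{2,2}$, $a_{3,2}$ so that both cancellations occur simultaneously.
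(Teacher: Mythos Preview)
Your proposal does not address the stated theorem at all. Theorem~\ref{main-Thm1} is a result \emph{quoted} from \cite{KDP2021}: given that the functional $\mathbf{u}$ satisfies $\mathbf{D}_q(\phi\mathbf{u})=\mathbf{S}_q(\psi\mathbf{u})$ with $\phi(z)=az^2+bz+c$ and $\psi(z)=dz+e$, one must derive the explicit formulas \eqref{Bn-Cn-Dx} for the recurrence coefficients $B_n$ and $C_{n+1}$ in terms of $a,b,c,d,e$ via $d_n$, $e_n$ and $\phi^{[n]}$. The paper does not supply a proof of this statement; it simply cites it and uses it as a tool in Section~\ref{example}.

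What you have written is instead a sketch of the proof of Theorem~\ref{propo-sol-q-quadratic}: you start from the structure relation \eqref{equation-case-deg-is-two}, manipulate dual bases to arrive at the distributional equation $\mathbf{D}_q(\phi\mathbf{u})=\mathbf{S}_q(\psi\mathbf{u})$, and then invoke Theorem~\ref{T}. That is the converse direction to Theorem~\ref{main-Thm1} and involves entirely different hypotheses (the structure relation, the conditions \eqref{condition-case-deg-is-two-1}--\eqref{condition-case-deg-is-two-2}) and a different conclusion (identification as Askey--Wilson polynomials, not formulas for $B_n$, $C_{n+1}$). Nothing in your argument computes $B_n$ or $C_{n+1}$ in the form \eqref{Bn-Cn-Dx}, nor does it introduce $d_n=a\gamma_n+d\alpha_n$, $e_n=b\gamma_n+e\alpha_n$, or the polynomial $\phi^{[n]}$. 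You have matched the wrong theorem.
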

The monic Askey-Wilson polynomial, $(Q_n(\cdot; a_1, a_2, a_3, a_4 | q))_{n\geq 0}$, satisfy \eqref{TTRR_relation} (see \cite[(14.1.5)]{KLS2010}) with
\begin{align*}
B_n &= a_1+\frac{1}{a_1}-\frac{(1-a_1a_2q^n)(1-a_1a_3q^n)(1-a_1a_4q^n)(1-a_1a_2a_3a_4q^{n-1})}{a_1(1-a_1a_2a_3a_4q^{2n-1})(1-a_1a_2a_3a_4q^{2n})}\\[7pt]
&\quad-\frac{a_1(1-q^n)(1-a_2a_3q^{n-1})(1-a_2a_4q^{n-1})(1-a_3a_4q^{n-1})}{(1-a_1a_2a_3a_4q^{2n-1})(1-a_1a_2a_3a_4q^{2n-2})},\\[7pt]
C_{n+1}&=(1-q^{n+1})(1-a_1a_2a_3a_4q^{n-1}) \\[7pt]
&\quad\times \frac{(1-a_1a_2q^n)(1-a_1a_3q^n)(1-a_1a_4q^n)(1-a_2a_3q^n)(1-a_2a_4q^n)(1-a_3a_4q^n)}{4(1-a_1a_2a_3a_4q^{2n-1})(1-a_1a_2a_3a_4q^{2n})^2 (1-a_1a_2a_3a_4q^{2n+1})}
\end{align*}
and subject to the following restrictions (see \cite{KDP2021}):
$$
\begin{array}l
(1-a_1a_2a_3a_4q^n)(1-a_1a_2q^n)(1-a_1a_3q^n) \\[7pt]
\qquad\quad\times(1-a_1a_4q^n)(1-a_2a_3q^n)(1-a_2a_4q^n)(1-a_3a_4q^n) \neq 0.
\end{array}
$$

We now state our result. 
\begin{corollary} \label{main-thm-q-quadratic-2}

Let $(P_n)_{n\geq 0}$ be a monic OPS satisfying
\begin{align}
(z-r)\mathcal{D}_qP_{n}(z)= b_n\mathcal{S}_qP_n(z)+c_n\mathcal{S}_q P_{n-1}(z)\quad (n=0,1,\ldots)\;,\label{equation-case-deg-is-one-main}
\end{align}
with $c_n\neq 0$ for $n=0,1,\ldots$, where the constant parameter $r$ is chosen such that 
\begin{align}
2(2\alpha ^2 -1)\left(C_2+\alpha (B_1-B_0)r  \right)=B_1^2-B_0^2\;.\label{main-condition-case-deg-is-one-q}
\end{align}
Then $P_n$ is a specific case of the monic Askey-Wilson polynomial:
$$
 P_n(z)=Q_n \Big(z;a_1,a_2,a_3,a_4\Big|q \Big)\quad (n=0,1,\ldots) \;,
$$
where $a_1$, $a_2$, $a_3$ and $a_4$ are complex numbers solutions of the following equation
\begin{align}\label{equation-degree-4-q-a}
Z^4 -RZ^3 +TZ^2-SZ-q^{-1}=0\;,
\end{align} 
with
\begin{align}
\left( R,T,S\right) \in  \left\lbrace \Big(2\frac{qB_0-B_1}{q-1},T_1,2\frac{q^{-1}B_0-B_1}{q-1} \Big),   \right. \label{equation-4-q-case1}\\
\left. \Big(k\frac{(q+1)(2q^2+q+1)}{q^{3/2}(q-1)}, 4\frac{2C_1 + 4\alpha^4 +\alpha ^2 -1}{q-1}, k\frac{(q+1)(q^2+q+2)}{q^{3/2}(q-1)}  \Big),\right.\label{equation-4-q-case2}\\
\left. \Big(2\frac{(q+1)B_0}{q-1},\frac{4(3\alpha^2-1)}{q-1}, 2\frac{(1+q^{-1})B_0}{q-1}  \Big) \right\rbrace\;, \label{equation-4-q-case3}
\end{align}
being $k=\pm 1$ and 
\begin{align*}
T_1=& 1-q^{-1} +8\frac{(B_0^2 -\alpha^2)((4\alpha^2-3)B_1-B_0)}{(B_1+(4\alpha^2-1)B_0)(q-1)} -4\frac{(B_1-B_0)B_0}{q-1}\;.
\end{align*}
\end{corollary}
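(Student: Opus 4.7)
The plan is to specialize Theorem \ref{propo-sol-q-quadratic} to $a=0$, invoke Theorem \ref{T}, and then identify the Askey-Wilson parameters by matching coefficients.

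First I would check that condition \eqref{main-condition-case-deg-is-one-q} is precisely the specialization of condition \eqref{condition-case-deg-is-two-2} under the substitution $a=0$, $b=1$, $c=-r$. Writing \eqref{equation-case-deg-is-one-main} at $n=2$ and identifying the coefficients of $z^2$ and $z$ yields $b_2 = 2\alpha/(2\alpha^2-1)$ and $c_2 = (B_0+B_1)/(\alpha(2\alpha^2-1)) - 2r$; substituting into \eqref{condition-case-deg-is-two-2}, with $r_2=c_2$ and $r_3=c_3$ (since $a=0$), reduces after clearing denominators to \eqref{main-condition-case-deg-is-one-q}. Theorem \ref{propo-sol-q-quadratic} then furnishes the Pearson equation ${\bf D}_q(\phi{\bf u}) = {\bf S}_q(\psi{\bf u})$ with $\psi(z)=z-B_0$ and $\phi(z)=\mathfrak{a}z^2+\mathfrak{b}z+\mathfrak{c}$ read from \eqref{expression-phi-psi-general-case} at $a=0$: $\mathfrak{a}=(1-\alpha^2)/\alpha$, $\mathfrak{b}=-(B_1-(2\alpha^2-1)B_0)/(2\alpha)$, $\mathfrak{c}=-(2C_1+B_0(B_0-B_1))/(2\alpha)$. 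Theorem \ref{T} identifies $P_n$ with a monic Askey-Wilson polynomial $Q_n(\cdot;a_1,\ldots,a_4|q)$, and it remains only to determine the parameters.

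Denote by $\sigma_j$ the $j$-th elementary symmetric function of $(a_1,\ldots,a_4)$ and rescale $\phi,\psi$ to match the Ismail-normalized pair in Theorem \ref{T} (i.e.\ fix the scalar by requiring $\psi$ to have $z$-coefficient $1$). Comparison of the leading coefficient of $\phi$ forces $\sigma_4=-q^{-1}$, explaining the constant term in \eqref{equation-degree-4-q-a}. The constant term of $\psi$ gives $\sigma_1-\sigma_3 = 2(q+1)B_0/q$, the $z$-coefficient of $\phi$ gives $\sigma_1+\sigma_3 = (2(q^2+1)B_0 - 4qB_1)/(q(q-1))$, and solving yields exactly the Case~\eqref{equation-4-q-case1} expressions $R=\sigma_1 = 2(qB_0-B_1)/(q-1)$ and $S=\sigma_3 = 2(q^{-1}B_0-B_1)/(q-1)$. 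The constant term of $\phi$ then gives $\sigma_2 = 1-q^{-1}+(8C_1+4B_0(B_0-B_1))/(q-1)$, still depending on $C_1$.

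To eliminate $C_1$ I would impose the Askey-Wilson $C_1$-formula in terms of the $\sigma_j$'s (either from the explicit expressions recalled in Section \ref{example}, or via Theorem \ref{main-Thm1} applied to the Pearson equation above together with the TTRR of $(P_n)$). In the generic regime $B_1+(4\alpha^2-1)B_0 \neq 0$ this consistency relation solves to $C_1 = (B_0^2-\alpha^2)((4\alpha^2-3)B_1 - B_0)/(B_1+(4\alpha^2-1)B_0)$; inserting into $\sigma_2$ reproduces $T_1$, completing Case~\eqref{equation-4-q-case1}. When $B_1+(4\alpha^2-1)B_0 = 0$ the rational expression degenerates and the consistency instead pins down $B_0^2 = \alpha^2$, i.e.\ $B_0 = k\alpha$ with $k=\pm 1$ and $B_1 = -k(4\alpha^2-1)\alpha$; here $C_1$ is free and plugging into the formulas for $\sigma_1,\sigma_2,\sigma_3$ produces Case~\eqref{equation-4-q-case2}. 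The remaining symmetric branch $B_1=-B_0$ specializes the Case~\eqref{equation-4-q-case1} formulas (with $C_1=\alpha^2-B_0^2$) to Case~\eqref{equation-4-q-case3}. The main obstacle is this last step: writing the Askey-Wilson $C_1$ as a tractable function of the $\sigma_j$'s (through the six-fold product $\prod_{i<j}(1-a_ia_j)$) and performing the branch analysis of the resulting rational equation, checking at each stage that the admissibility restrictions on $(a_1,\ldots,a_4)$ recalled in Section \ref{example} are met and that no spurious solutions slip through.
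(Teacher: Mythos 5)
Your reduction to the Pearson equation ${\bf D}_q(\phi{\bf u})={\bf S}_q(\psi{\bf u})$ and your matching with Ismail's normalized pair are correct and agree with the paper: the specialization of \eqref{condition-case-deg-is-two-2} to \eqref{main-condition-case-deg-is-one-q}, the identification $\sigma_4=-q^{-1}$, and your values of $\sigma_1,\sigma_3$ and of $\sigma_2$ in terms of $C_1$ all check out. The gap is precisely at the step you flag as the main obstacle: eliminating $C_1$. You propose to obtain $C_1=(B_0^2-\alpha^2)\big((4\alpha^2-3)B_1-B_0\big)/\big(B_1+(4\alpha^2-1)B_0\big)$ (the paper's \eqref{C1-in-term-of-B0-B1-q}) as a \emph{consistency} condition between the Askey--Wilson $C_1$-formula and the Pearson equation, via Theorem \ref{main-Thm1} or the explicit formulas of Section \ref{example}. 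That consistency is vacuous. The Pearson data \eqref{phi-degree-1-q-quadratic}--\eqref{psi-degree-1-q-quadratic} is parametrized by the three free numbers $B_0,B_1,C_1$, and feeding it into \eqref{Bn-Cn-Dx} simply returns these values: for instance, for $n=0$ one finds $d_1=1/\alpha$, $\phi^{[0]}=\phi$, $-e_0/d_0=B_0$ and $\phi(B_0)=-C_1/\alpha$, so the formula gives $C_1=C_1$ identically, and likewise for $B_0$ and $B_1$. Equivalently, every Askey--Wilson functional with $a_1a_2a_3a_4=-q^{-1}$ satisfies a Pearson equation of exactly this form, with $\sigma_2$ (hence $C_1$) completely free; no relation among $B_0,B_1,C_1$ can come out of it.

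The relation \eqref{C1-in-term-of-B0-B1-q} is a genuine extra constraint that singles out, inside this three-parameter family, the functionals whose polynomials actually satisfy \eqref{equation-case-deg-is-one-main}: the Pearson equation is only a necessary consequence of the structure relation, not equivalent to it. To derive it one must return to \eqref{equation-case-deg-is-one-main} itself, which is what the paper does: compare leading coefficients to get $b_n,c_n$ as in \eqref{second-hand-datta-case-deg-1-q-quadratic}, compute $B_2$ and $C_2$ from \eqref{Bn-Cn-Dx} (giving \eqref{C-2-q-quadratic}--\eqref{B-2-q-quadratic}), use the hypothesis \eqref{main-condition-case-deg-is-one-q} to express $r$, and then write out the $n=3$ instance of \eqref{equation-case-deg-is-one-main} coefficient by coefficient; \eqref{C1-in-term-of-B0-B1-q} falls out of the resulting two equations. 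Without this ingredient your branch analysis collapses: you cannot produce $T_1$ in case \eqref{equation-4-q-case1}, nor force $B_0=\pm\alpha$ when $B_1+(4\alpha^2-1)B_0=0$ (case \eqref{equation-4-q-case2}), nor $C_1=\alpha^2-B_0^2$ when $B_1=-B_0$ (case \eqref{equation-4-q-case3}). So the proposal, as it stands, does not close; it needs the $n=3$ coefficient computation (or some equivalent further use of \eqref{equation-case-deg-is-one-main}) inserted at that point.
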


\begin{proof} 
Let ${\bf u}\in \mathcal{P}^*$ be the regular functional with respect to $(P_n)_{n\geq 0}$. Note that \eqref{main-condition-case-deg-is-one-q} follows from \eqref{condition-case-deg-is-two-2}. Assume that under the condition \eqref{main-condition-case-deg-is-one-q}, $(P_n)_{n\geq 0}$ satisfies \eqref{equation-case-deg-is-one-main}. Then from Theorem \ref{propo-sol-q-quadratic} and his proof,  ${\bf u}$ satisfies the distributional equation ${\bf D}_q(\phi {\bf u})={\bf S}_q (\psi {\bf u})$, where
\begin{align}
&\phi(z)=-\frac{\alpha ^2 -1}{\alpha}z^2 -\frac{1}{2\alpha}\left(B_1 -(2\alpha^2-1)B_0  \right)z+\frac{1}{2\alpha}\left((B_1-B_0)B_0-2C_1 \right) \;,\label{phi-degree-1-q-quadratic}\\
&\psi(z)=z-B_0\;. \label{psi-degree-1-q-quadratic}
\end{align}
We then apply \eqref{Bn-Cn-Dx} to obtain
\begin{align}
&C_2=\frac{1}{4\alpha^2-2}\left[(B_0+B_1)^2-4\alpha^2\left(B_0B_1+1-\alpha^2-C_1   \right)               \right]\;; \label{C-2-q-quadratic}\\
&B_2=-B_0+\frac{2}{4\alpha^2-3}B_1\;.\label{B-2-q-quadratic}
\end{align}
In addition, we claim that the parameters $B_0$, $B_1$ and $C_1$ are related by the following equation
\begin{align}
\left(B_1+(4\alpha^2-1)B_0 \right) C_1=\left( \alpha^2-B_0^2 \right)\big(B_0-(4\alpha^2-3)B_1\big)\;,\label{C1-in-term-of-B0-B1-q}
\end{align}
with $B_0+B_1=0$ or $B_0+B_1 \neq 0$.

Indeed writing $P_n(z)=z^n +f_nz^{n-1}+\cdots$, where $f_0=0$ and $f_n=-\sum_{j=0} ^{n-1}B_j$ for $n=1,2,\ldots$, we identify the coefficients of the two firsts terms with higher degree in \eqref{equation-case-deg-is-one-main} using \eqref{Dx-xnSx-xn} to obtain 
\begin{align}
b_n=\frac{\gamma_n}{\alpha_n},\quad c_n=-r\frac{\gamma_n}{\alpha_{n-1}}+\frac{1}{\alpha_n\alpha_{n-1}} \sum_{j=0} ^{n-1}B_j ~\quad (n=0,1,\ldots)\;.\label{second-hand-datta-case-deg-1-q-quadratic}
\end{align}

Also by direct computations we obtain
\begin{align*}
\mathcal{D}_qP_2(z)=&2\alpha z-B_0-B_1\;,\\
\mathcal{S}_qP_2(z)=&(2\alpha^2-1) z^2 -\alpha(B_0+B_1)z+B_0B_1+1-\alpha^2-C_1\;.
\end{align*}
Similarly we obtain $\mathcal{D}_qP_3$ and $\mathcal{S}_qP_3$ by taking $n=3$ in \eqref{TTRR_relation} using \eqref{def-Dx-fg}--\eqref{def-Sx-fg}:
\begin{align*}
\mathcal{D}_qP_3(z)=&(4\alpha^2-1)z^2-2\alpha(B_0+B_1+B_2)z+B_0B_1+B_0B_2+B_1B_2\\
&-C_1-C_2 +1-\alpha^2 \;,
\end{align*}
and
\begin{small}
\begin{align*}
\mathcal{S}_qP_3(z)&=\alpha(4\alpha^2-3) z^3-(2\alpha^2-1)(B_0+B_1+B_2)z^2\\
&+\alpha \Big(B_0B_1+B_0B_2+B_1B_2-C_2-C_1+3(1-\alpha^2) \Big)z\\
&+B_0C_2+B_2C_1+(\alpha^2-1)(B_0+B_1+B_2)-B_0B_1B_2  \;.
\end{align*}
\end{small}
Taking $n=3$ in \eqref{equation-case-deg-is-one-main}, using what is preceding we obtain the following equations:
\begin{align*}
C_2+C_1+4\alpha^2(\alpha^2-1) +\frac{\alpha(4\alpha^2-3)}{2}\Big(2B_2r+   (B_0+B_1)(c_3+2r)\Big)\\
-B_0B_1-B_0B_2-B_1B_2=0\;, \\
\end{align*}
\begin{align*}
(B_0b_3-r )C_2+(B_2b_3-r-c_3 )C_1+&(\alpha^2-1)b_2(B_0+B_1+B_2+\alpha r)\\
&+(r+c_3)B_0B_1+rB_2(B_0+B_1)=b_3B_0B_1B_2\;,
\end{align*}
where $b_2$ and $b_3$ are giving using \eqref{second-hand-datta-case-deg-1-q-quadratic}.
Hence \eqref{C1-in-term-of-B0-B1-q} is obtained from the previous equations by using the expressions of $c_3$, $B_2$, $C_2$ and $r$ giving by \eqref{second-hand-datta-case-deg-1-q-quadratic}, \eqref{B-2-q-quadratic}, \eqref{C-2-q-quadratic} and \eqref{main-condition-case-deg-is-one-q}, respectively.   
\begin{itemize}
\item[I-]Assume that $B_1=-B_0$.\\
Taking into account that from \eqref{main-condition-case-deg-is-one-q}, $B_1\neq B_0$, we see that in the present case $B_0\neq 0$. From \eqref{C1-in-term-of-B0-B1-q} we have $C_1=\alpha^2-B_0^2 \;,$ then we obtain $\psi(z)=z-B_0$ and $\phi(z)=-(\alpha -\alpha ^{-1})z^2+\alpha B_0 z-\alpha$. This implies that $B_0$ is the only free parameter. Let $a_1$, $a_2$, $a_3$ and $a_4$ be four complex numbers solution of \eqref{equation-degree-4-q-a} for $(R,T,S)$ given by \eqref{equation-4-q-case3}. Then $a_1a_2a_3a_4=-1/q$, $a_1a_2+ a_1a_3 +a_1a_4+a_2a_3+a_2a_4+a_3a_4=4(3\alpha^2-1)/(q-1)$ and 
\begin{align*}
B_0&=\frac{q-1}{2(q+1)}(a_1+a_2+a_3+a_4)\\
&=\frac{q-1}{2(1+q^{-1})}(a_1a_2a_3+a_1a_2a_4+a_1a_3a_4+a_2a_3a_4)\;.
\end{align*}
Hence the result follows by applying \eqref{Bn-Cn-Dx}. Note that $r$ and $c_n$ can be computed using \eqref{main-condition-case-deg-is-one-q} and \eqref{second-hand-datta-case-deg-1-q-quadratic}, respectively. \\

\item[II-] Case where $B_1\neq -B_0$.
\begin{itemize}
\item[II-a] If $B_1=(1-4\alpha^2)B_0$, then since by \eqref{main-condition-case-deg-is-one-q}, $B_1\neq B_0$, we obtain from \eqref{C1-in-term-of-B0-B1-q}, $B_0=\pm \alpha$ and so $B_1=\pm \alpha (1-4\alpha^2)$. Therefore $C_1$ is the only free parameter. In addition we obtain $\psi(z)=z\pm \alpha$ and $\phi(z)=-(\alpha -\alpha ^{-1})z^2 \pm (3\alpha^2-1)z-2\alpha^3  -C_1/\alpha$. Let $a_1$, $a_2$, $a_3$ and $a_4$ be four complex numbers solution of \eqref{equation-degree-4-q-a} for $(R,T,S)$ given by \eqref{equation-4-q-case2}. Then $a_1a_2a_3a_4=-1/q$, $a_1a_2+ a_1a_3 +a_1a_4+a_2a_3+a_2a_4+a_3a_4=4(2C_1 +4\alpha^4+\alpha^2-1)/(q-1)$ , $a_1+a_2+a_3+a_4=kq^{-3/2}(q+1)(2q^2+q+1)/(q-1)$, $a_1a_2a_3+a_1a_2a_4+a_1a_3a_4+a_2a_3a_4=kq^{-3/2}(q+1)(q^2+q+2)/(q-1)$, $k=\pm 1$, and so we obtain
\begin{align*}
C_1&=\frac{q-1}{8}\Big(a_1a_2+a_1a_3+a_1a_4+a_2a_3+a_2a_4+a_3a_4 -\frac{4(4\alpha^4 +\alpha^2-1)}{q-1} \Big) \;.
\end{align*}
Hence the result follows by applying \eqref{Bn-Cn-Dx}. Note that $r$ and $c_n$ can be computed using \eqref{main-condition-case-deg-is-one-q} and \eqref{second-hand-datta-case-deg-1-q-quadratic}, respectively.

\item[II-b] If $B_1\neq (1-4\alpha^2)B_0$, then from \eqref{C1-in-term-of-B0-B1-q}, we obtain can express $C_1$ in term of $B_0$ and $B_1$ and so, for this case $B_0$ and $B_1$ are the only free parameters. Consider four complex numbers $a_1$, $a_2$, $a_3$ and $a_4$ solutions of equation \eqref{equation-degree-4-q-a} with $(R,T,S)$ given by \eqref{equation-4-q-case1}. Proceeding as in the previous cases,
we can write $B_0$, $B_1$ and $C_1$ only in terms of $a_1$, $a_2$, $a_3$ and $a_4$. Therefore from \eqref{phi-degree-1-q-quadratic}--\eqref{psi-degree-1-q-quadratic}, we use \eqref{Bn-Cn-Dx} to obtain the desired result.

\end{itemize}
\end{itemize}
It is important to notice that for each of the above mentioned cases, $a_i$, $i=1,2,3,4$, are roots of the four degree polynomial \eqref{equation-degree-4-q-a} with coefficients depending on some free parameters and so, explicit expressions of these roots can be only given by a computer system. Indeed they always appear in complicated and very large forms and this is why we find unnecessary to write them here but they exist.
\end{proof}
\begin{remark}
This section highlights the use of initial conditions in the considered equation together with Theorem \ref{main-Thm1} to reduce the free parameters and therefore identify the specific polynomial solutions to our problem. Similar investigations can be done for the case where $a\neq 0$ in \eqref{equation-case-deg-is-two}.
\end{remark}

\section{Conclusion}
The results obtained here were proved for the $q$-quadratic lattice
$x(s)=(q^{-s} +q^{s})/2$, where $q$ is not a root of the unity, but it can be easily extended to the quadratic lattice $x(s)=\mathfrak{c}_4s^2+\mathfrak{c}_5s+\mathfrak{c}_6$ by taking the appropriate limit as it was discussed in \cite{KDP2021}. Therefore, by choosing the corresponding quadratic lattice Theorem \ref{propo-sol-q-quadratic} can be easily adapted to characterize the Racah and Wilson polynomials in case there exists the analogue of the Ismail Theorem \ref{T}.

\section*{Declaration}
The authors declare that they have no conflict of interest.

\section*{Acknowledgements }
We would like to thank Kenier Castillo for drawing our attention to the question solved in this work. This work is supported by the Centre for Mathematics of the University of Coimbra-UID/MAT/00324/2019, funded by the Portuguese Government through FCT/MEC and co-funded by the European Regional Development Fund through the Partnership Agreement PT2020. A. Suzuki is also supported by the FCT grant 2021.05089.BD. D. Mbouna thanks the support of the ERDF and Consejeria de Economia, Conocimiento, Empresas y Universidad de la Junta de Andalucia(grant UAL18-FQM-B025-A).

{

\end{document}